\newcommand\blfootnote[1]{%
  \begingroup
  \renewcommand\thefootnote{}\footnote{#1}%
  \addtocounter{footnote}{-1}%
  \endgroup
}
\newtheorem{thm}{Theorem}[section]
\numberwithin{equation}{section}
\newtheorem{defin}[thm]{Definition}
\newtheorem{cor}[thm]{Corollary} %%Delete [teo] to re-start numbering
\newtheorem{lem}[thm]{Lemma} %%Delete [teo] to re-start numbering
\newtheorem{pr}[thm]{Proposition}
\newtheorem{rem}[thm]{Remark}
\theoremstyle{definition}
\newtheorem{ex}[thm]{Example}
\patchcmd{\@maketitle}{\begin{center}}{\begin{flushleft}}{}{}
\patchcmd{\@maketitle}{\begin{tabular}[t]{c}}{\begin{tabular}[t]{@{}l}}{}{}
\patchcmd{\@maketitle}{\end{center}}{\end{flushleft}}{}{}
\begin{document}
\title{Comparison theorems for summability methods of sequences of fuzzy numbers}
\author{Enes Yavuz}
\date{{\small Department of Mathematics, Manisa Celal Bayar University, Manisa, Turkey.\\ E-mail: enes.yavuz@cbu.edu.tr}}
\maketitle
\thispagestyle{titlepage}
\blfootnote{\emph{Key words and phrases:} Fuzzy set theory, power series methods, inclusion theorems.\\\rule{0.63cm}{0cm}\emph{\!\!Mathematics Subject Classification:} 03E72, 40G10, 40D25}

\noindent\textbf{Abstract:}
\noindent In this study we compare Ces\`{a}ro and Euler weighted mean methods of summability of sequences of fuzzy numbers with Abel and Borel power series methods of summability of sequences of fuzzy numbers. Also, some results dealing with series of fuzzy numbers are obtained.

\section{Introduction}\allowdisplaybreaks
It is well known that the facts that human being met in the natural world are generally complex and inexact. Complexity and inexactness of real-world events often stems from uncertain nature of the parameters and from vague status of the underlying objects. Realizing that uncertainty is ubiquitous and essential in complex systems, researchers designed many uncertainty theories such as probability theory, evidence theory, fuzzy set theory to cope with problems of vagueness. Considered as the recent one, fuzzy set theory was introduced by Zadeh\cite{zadeh} in 1965 and since then theory has advanced in many branches of science and engineering. In mathematics, different classes of fuzzy numbers are introduced and various properties of these classes are investigated\cite{add1,add2,add3,add4}. In particular, classes of sequences of fuzzy numbers  are presented and convergence properties of sequences and series of fuzzy numbers are studied\cite{matloka,nanda,different1,different2,convergence1,convergence2,different3}. Besides, with the purpose of handling divergent sequences, summability methods of sequences of fuzzy numbers are defined and Tauberian conditions which guarantee the convergence of summable sequences are given\cite{summability0,summability1,sefapower,subrahmanyan,summability4}. Among them, Ces\`{a}ro, Euler weighted mean methods of summability and Abel, Borel power series methods of summability for sequences of fuzzy numbers have been studied recently and corresponding Tauberian theorems have been proved\cite{cesaro2,cesaro1,cesaro3,cesaro4,yavuzabel,yavuzborel,yavuzeuler}.

The main goal of this paper is to compare Ces\`{a}ro and Euler summability methods of sequences of fuzzy numbers with Abel and Borel summability methods, respectively. To achive this goal, in Section 3  we give an optimal bound for Ces\`{a}ro summable sequences of fuzzy numbers and prove a comparison theorem between Ces\`{a}ro and Abel methods of summability of sequences of fuzzy numbers. A Mertens' type result concerning multiplication of series of fuzzy numbers is also obtained.  In section 4 firstly we show that Euler summability method $E_p$ becomes stronger in summing up divergent sequences of fuzzy numbers as the order $p$ increases and then prove that $E_p$ convergence of a sequence of fuzzy numbers implies Borel convergence. Finally in Section 5, as results of comparisons made in Section 3-4, some Tauberian theorems for Abel and Borel methods of summability of sequences of fuzzy numbers have been extended to Ces\`{a}ro and Euler summability methods.
\section{Preliminaries}
A \textit{fuzzy number} is a fuzzy set on the real axis, i.e. u is normal, fuzzy convex, upper semi-continuous and $\operatorname{supp}u =\overline{\{t\in\mathbb{R}:u(t)>0\}}$ is compact \cite{zadeh}.
We denote the space of fuzzy numbers by $E^1$. \textit{$\alpha$-level set} $[u]_\alpha$ of $u\in E^1$
is defined by
\begin{eqnarray*}
[u]_\alpha:=\left\{\begin{array}{ccc}
\{t\in\mathbb{R}:u(t)\geq\alpha\} & , & \qquad if \quad 0<\alpha\leq 1, \\[6 pt]
\overline{\{t\in\mathbb{R}:u(t)>\alpha\}} & , &
if \quad \alpha=0.\end{array} \right.
\end{eqnarray*}
Let $u,v\in E^1$ and $k\in\mathbb{R}$. The addition and scalar multiplication are defined by
\begin{eqnarray*}
[u+v]_\alpha=[u^-_{\alpha}+v^-_{\alpha}, u^+_{\alpha}+v^+_{\alpha}], [k u]_\alpha=k[u]_\alpha
\end{eqnarray*}
where $[u]_\alpha=[u^-_{\alpha}, u^+_{\alpha}]$, for all $\alpha\in[0,1]$.

\begin{lem}\label{lemma}\cite{bede} The following statements hold:
\begin{itemize}
\item[(i)] $\overline{0} \in E^{1}$ is neutral element with respect to $+$, i.e., $u+\overline{0}=\overline{0}+u=u$ for all $u \in E^{1}$.
\item [(ii)] With respect to $\overline{0}$, none of $u \neq \overline{r}$, $r\in \mathbb{R}$
has opposite in $E^{1}.$
\item[(iii)] For any $a,b \in\mathbb{R}$ with $a, b \geq 0$ or $a,b \leq 0$ and any $u\in E^{1}$, we have
$(a + b) u = au + bu$. For general $a, b \in \mathbb{R}$, the above property does not hold.
\item[(iv)] For any $a \in \mathbb{R}$ and any $u, v \in E^{1}$, we have
$a (u+ v) = au + av.$
\item[(v)] For any $a, b \in \mathbb{R}$ and any $u \in E^{1}$, we have
$a (b u) = (a b) u.$
\end{itemize}
\end{lem}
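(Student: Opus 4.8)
The plan is to reduce every assertion to an elementary statement about compact real intervals, using the representation $[u]_\alpha=[u^-_\alpha,u^+_\alpha]$ together with the standard fact that a fuzzy number is completely determined by its family of $\alpha$-level sets: two elements of $E^1$ coincide precisely when their $\alpha$-cuts agree for every $\alpha\in[0,1]$. The first thing I would record explicitly is the sign-dependent form of scalar multiplication, namely $[ku]_\alpha=[ku^-_\alpha,ku^+_\alpha]$ when $k\ge 0$ and $[ku]_\alpha=[ku^+_\alpha,ku^-_\alpha]$ when $k<0$, since the endpoint reversal for negative scalars is the source of the only nontrivial phenomena in the lemma.

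For (i) I would compute $[u+\overline{0}]_\alpha=[u^-_\alpha+0,u^+_\alpha+0]=[u]_\alpha$ for all $\alpha$, using that $[\overline 0]_\alpha=\{0\}$, and conclude $u+\overline 0=u$; commutativity of the addition on endpoints gives the other equality. Parts (iv) and (v) are of the same routine character: for (iv) I would split on the sign of $a$ and check that both $[a(u+v)]_\alpha$ and $[au+av]_\alpha$ equal $[au^-_\alpha+av^-_\alpha,au^+_\alpha+av^+_\alpha]$ (respectively the endpoint-swapped interval when $a<0$); for (v) I would iterate scalar multiplication and observe that the sign of the product $ab$ matches the net effect of applying the two sign conventions in succession, so $[a(bu)]_\alpha=(ab)[u]_\alpha=[(ab)u]_\alpha$.

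For the positive half of (iii), with $a,b\ge 0$ (or $a,b\le 0$) the two scalars act with the same orientation, so no endpoint swap occurs within the sum, and both sides reduce to $[(a+b)u^-_\alpha,(a+b)u^+_\alpha]$ (respectively its swap). The failure for mixed signs I would exhibit by the canonical counterexample $a=1$, $b=-1$: then $(a+b)u=\overline 0$, whereas $[u+(-1)u]_\alpha=[u^-_\alpha-u^+_\alpha,\,u^+_\alpha-u^-_\alpha]$ is a symmetric interval of width $2(u^+_\alpha-u^-_\alpha)$, which degenerates to $\{0\}$ only when $u^-_\alpha=u^+_\alpha$ for every $\alpha$, i.e.\ only for crisp $u$.

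The genuinely structural point, and the step I expect to be the main obstacle, is (ii). Supposing $u$ had an opposite $v$ with $u+v=\overline 0$, I would read off $u^-_\alpha+v^-_\alpha=0$ and $u^+_\alpha+v^+_\alpha=0$, hence $v^-_\alpha=-u^-_\alpha$ and $v^+_\alpha=-u^+_\alpha$. The constraint that $v$ be a fuzzy number forces $v^-_\alpha\le v^+_\alpha$, i.e.\ $-u^-_\alpha\le -u^+_\alpha$, which together with $u^-_\alpha\le u^+_\alpha$ yields $u^-_\alpha=u^+_\alpha$ for all $\alpha$; thus $u$ is a degenerate interval at every level and must equal $\overline r$ for some $r\in\mathbb{R}$, contradicting $u\neq\overline r$. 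The only delicate aspect here is to justify that the prescribed endpoint functions $v^\pm_\alpha$ cannot assemble into an admissible fuzzy number unless $u$ is crisp; this is exactly the interval-ordering obstruction just described, and it also explains retrospectively why the negative-scalar endpoint reversal makes $u+(-1)u\neq\overline 0$ in (iii).
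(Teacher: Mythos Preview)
The paper does not prove this lemma at all: it is quoted verbatim from \cite{bede} and stated without argument, so there is no ``paper's own proof'' to compare against. Your proposal therefore supplies a proof where the paper supplies none.

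That said, your argument is correct. The reduction to $\alpha$-level intervals and the sign-dependent endpoint formula for scalar multiplication is exactly the right framework, and each of the five verifications goes through as you describe. The only place worth one extra sentence is the very end of (ii): having shown $u^-_\alpha=u^+_\alpha$ for every $\alpha$, you conclude $u=\overline{r}$ for some fixed $r$. This uses the nesting of level sets (equivalently, that $\alpha\mapsto u^-_\alpha$ is nondecreasing and $\alpha\mapsto u^+_\alpha$ is nonincreasing), which forces the common value $u^-_\alpha=u^+_\alpha$ to be constant in $\alpha$. You allude to this but do not state it explicitly; making it explicit closes the argument cleanly.
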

The metric $D$ on $E^1$ is defined as
\begin{eqnarray*}
 D(u,v):=
\sup_{\alpha\in[0,1]}\max\{|u^-_{\alpha}-v^-_{\alpha}|,|u^+_{\alpha}-
v^+_{\alpha}|\}.
\end{eqnarray*}
\begin{pr}\label{pro}\cite{bede}
\label{p02} Let $u,v,w,z\in E^1$ and $k\in\mathbb{R}$. Then,
\begin{itemize}
\item [(i)] $(E^1,D)$ is a complete metric space.
\item [(ii)] $D(ku,kv)=|k|D(u,v)$.
\item [(iii)] $D(u+v,w+v)=D(u,w)$.
\item [(iv)] $D(u+v,w+z)\leq D(u,w)+D(v,z)$.
\item [(v)] $|D(u,\overline{0})-D(v,\overline{0})|\leq D(u,v)\leq
D(u,\overline{0})+D(v,\overline{0})$.
\end{itemize}
\end{pr}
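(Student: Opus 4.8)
The plan is to reduce each assertion to a statement about the endpoint functions $\alpha\mapsto u^-_\alpha$ and $\alpha\mapsto u^+_\alpha$ and then take the supremum over $\alpha\in[0,1]$, using throughout that a fuzzy number is determined by its family of $\alpha$-level sets, so that $u=v$ is equivalent to $u^-_\alpha=v^-_\alpha$ and $u^+_\alpha=v^+_\alpha$ for all $\alpha$.

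First I would check that $D$ is a metric. Nonnegativity and symmetry are immediate; $D(u,v)=0$ forces $u^-_\alpha=v^-_\alpha$ and $u^+_\alpha=v^+_\alpha$ for every $\alpha$, hence $u=v$; and the triangle inequality follows by inserting the scalar inequality $|u^-_\alpha-w^-_\alpha|\leq|u^-_\alpha-v^-_\alpha|+|v^-_\alpha-w^-_\alpha|$ (and its $+$ analogue) inside the $\max$ and then passing to the supremum. Parts (iii) and (iv) are then direct: substituting $[u+v]_\alpha=[u^-_\alpha+v^-_\alpha,\,u^+_\alpha+v^+_\alpha]$, the common summand cancels at each level in (iii), giving exactly $D(u,w)$, while in (iv) the scalar triangle inequality splits each difference and the supremum of a sum is bounded by the sum of suprema. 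For (ii) the only care needed is the sign of $k$: if $k\geq0$ then $(ku)^-_\alpha=ku^-_\alpha$ and $(ku)^+_\alpha=ku^+_\alpha$, whereas if $k<0$ scalar multiplication reverses the interval, so $(ku)^-_\alpha=ku^+_\alpha$ and $(ku)^+_\alpha=ku^-_\alpha$; in both cases the quantity under the supremum equals $|k|\max\{|u^-_\alpha-v^-_\alpha|,|u^+_\alpha-v^+_\alpha|\}$, whence $D(ku,kv)=|k|D(u,v)$. Finally (v) is the reverse triangle inequality: the right-hand bound is the triangle inequality with middle point $\overline{0}$, and the left-hand bound follows from $D(u,\overline{0})\leq D(u,v)+D(v,\overline{0})$ together with the symmetric estimate obtained by exchanging $u$ and $v$.

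The substantive step is completeness in (i). Given a $D$-Cauchy sequence $(u_n)\subset E^1$, the estimate $\sup_{\alpha}|(u_n)^\pm_\alpha-(u_m)^\pm_\alpha|\leq D(u_n,u_m)$ shows that the functions $\alpha\mapsto(u_n)^-_\alpha$ and $\alpha\mapsto(u_n)^+_\alpha$ are Cauchy in the supremum norm on $[0,1]$; by completeness of $\mathbb{R}$ they converge uniformly to bounded functions $f^-,f^+$. The main obstacle is to verify that $(f^-,f^+)$ is the pair of endpoint functions of an \emph{actual} fuzzy number $u\in E^1$, that is, that it satisfies the standard characterization (due to Goetschel and Voxman) of admissible endpoint functions: $f^-$ nondecreasing, $f^+$ nonincreasing, both left-continuous on $(0,1]$ and right-continuous at $0$, and $f^-_\alpha\leq f^+_\alpha$. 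Here I would argue that each of these properties is stable under uniform limits --- monotonicity and the ordering $f^-_\alpha\leq f^+_\alpha$ pass to the pointwise limit, while uniform convergence preserves the one-sided continuity conditions. Defining $u$ by $[u]_\alpha=[f^-_\alpha,f^+_\alpha]$, the uniform convergence of the endpoints then yields $D(u_n,u)\to0$, which completes the argument.
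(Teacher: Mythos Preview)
Your argument is correct and is essentially the standard proof of these facts, but there is nothing to compare it against: in the paper this proposition is quoted from \cite{bede} as a known result and no proof is given. Your levelwise verifications of (ii)--(v) are exactly what one expects, and your completeness argument via uniform convergence of the endpoint functions together with the Goetschel--Voxman characterization is the usual route; the only point worth a remark is that the preservation of left-continuity on $(0,1]$ and right-continuity at $0$ under uniform limits does require the convergence to be uniform (pointwise would not suffice), and you have correctly used that.
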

A sequence $(u_n)$ of fuzzy numbers is said to be convergent to $\mu\in
E^1$ if for every $\varepsilon>0$ there exists an
$n_0=n_0(\varepsilon)\in\mathbb{N}$ such that $D(u_n,\mu)<\varepsilon~~\text{for all} ~~n\geq n_0.$ We mean that sequence $(u_n)$ converges to $\mu$ by $u_n\to\mu$.
\begin{defin}\label{series}\cite{kim}
Let $(u_k)$ be a sequence of fuzzy numbers. Then the expression $\sum u_k$ is called a
\textit{series of fuzzy numbers}. Denote $s_n=\sum_{k=0}^nu_k$ for
all $n\in\mathbb{N}$. If the sequence $(s_n)$ converges to a fuzzy
number $u$, then we say that the series $\sum u_k$ of fuzzy numbers
converges to $u$ and write $\sum u_k=u$ which implies as
$n\to\infty$ that
\begin{eqnarray*}
\sum_{k=0}^{n}u^{-}_{k}(\alpha)\rightarrow u^{-}(\alpha) ~\text{
and }~ \sum_{k=0}^{n}u^{+}_{k}(\alpha)\rightarrow u^{+}(\alpha)
\end{eqnarray*}
uniformly in $\alpha\in [0,1]$. Conversely, if the series
$\sum_{k}u^{-}_{k}(\alpha)=u^{-}(\alpha)$ and
$\sum_{k}u^{+}_{k}(\alpha)=u^{+}(\alpha)$ converge uniformly in
$\alpha$, then $u=\{(u^{-}(\alpha),u^{+}(\alpha)): \alpha\in
[0,1]\}$ defines a fuzzy number such that $u=\sum u_{k}$. We say otherwise the series of fuzzy numbers diverges.
\end{defin}
\begin{rem}\cite{yavuzeuler}\label{change}
Let $(u_n)$ be a sequence of fuzzy numbers. If $(x_n)$ is a sequence of non-negative real numbers, then
\begin{eqnarray*}
\sum\limits_{k=0}^nx_k\sum\limits_{m=0}^ku_m=\sum\limits_{m=0}^nu_m\sum\limits_{k=m}^nx_k
\end{eqnarray*}
holds by $(iii)$ and $(iv)$ of Lemma \ref{lemma}.
\end{rem}
\begin{thm}\label{triangle}\cite{tb}
If $\sum u_n$ and $\sum v_n$ converge, then $D\left(\sum u_n, \sum v_n\right)\leq \sum D(u_n, v_n).$
\end{thm}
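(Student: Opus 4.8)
The plan is to first establish the finite analogue of the inequality for the partial sums and then pass to the limit. Write $s_n=\sum_{k=0}^n u_k$ and $t_n=\sum_{k=0}^n v_k$. By hypothesis the two series converge, so $s_n\to u:=\sum u_n$ and $t_n\to v:=\sum v_n$ in the metric $D$. I would first prove by induction on $n$ that
\[
D(s_n,t_n)\leq \sum_{k=0}^n D(u_k,v_k).
\]
The base case $n=0$ is an equality, and the inductive step uses the subadditivity of $D$ recorded in Proposition \ref{pro}(iv): writing $s_{n+1}=s_n+u_{n+1}$ and $t_{n+1}=t_n+v_{n+1}$ gives $D(s_{n+1},t_{n+1})\leq D(s_n,t_n)+D(u_{n+1},v_{n+1})$, and the inductive hypothesis closes the argument.

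Next I would bound the right-hand side by the full series, $\sum_{k=0}^n D(u_k,v_k)\leq \sum_{k=0}^\infty D(u_k,v_k)$ (the inequality being vacuous if the latter diverges to $+\infty$), so that $D(s_n,t_n)\leq\sum_{k=0}^\infty D(u_k,v_k)$ for every $n$. It then remains to let $n\to\infty$ on the left. For this I would invoke continuity of the metric in a concrete form: from the triangle inequality for $D$ one has $D(s_n,t_n)\leq D(s_n,u)+D(u,v)+D(v,t_n)$ and, symmetrically, $D(u,v)\leq D(u,s_n)+D(s_n,t_n)+D(t_n,v)$, whence $|D(s_n,t_n)-D(u,v)|\leq D(s_n,u)+D(t_n,v)\to 0$. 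Therefore $D(s_n,t_n)\to D(u,v)$, and passing to the limit in the displayed bound yields $D(\sum u_n,\sum v_n)=D(u,v)\leq\sum_{k=0}^\infty D(u_k,v_k)$, as required.

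The argument is essentially routine, so there is no serious obstacle; the only point needing care is the interchange of the limit with the metric, which I would justify by the reverse-triangle estimate above rather than by appealing to continuity abstractly. A secondary point worth flagging is that the conclusion holds trivially when $\sum D(u_n,v_n)=+\infty$, so no separate convergence hypothesis on the real series $\sum D(u_n,v_n)$ is needed.
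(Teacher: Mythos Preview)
Your argument is correct: the induction via Proposition~\ref{pro}(iv) gives the finite inequality, and the reverse-triangle estimate justifies passing to the limit. Note, however, that the paper does not supply its own proof of this theorem; it is quoted from \cite{tb} and stated without argument, so there is no in-paper proof to compare against.
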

\begin{thm}\label{absolute}\cite{tb}
If $\sum D(u_k, \bar{0}) <\infty$, then series $\sum u_k$ is convergent.
\end{thm}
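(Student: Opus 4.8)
The plan is to exploit the completeness of $(E^1, D)$ recorded in Proposition \ref{pro}(i): it suffices to show that the partial sums $s_n = \sum_{k=0}^n u_k$ form a Cauchy sequence in the metric $D$, after which completeness supplies a limit $u \in E^1$ which, by Definition \ref{series}, is exactly the sum of the series.

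First I would record a subadditivity property for $D(\cdot,\overline{0})$. Taking $w = z = \overline{0}$ in Proposition \ref{pro}(iv) and using that $\overline{0} + \overline{0} = \overline{0}$ by Lemma \ref{lemma}(i) gives $D(u+v,\overline{0}) \leq D(u,\overline{0}) + D(v,\overline{0})$, which extends by induction to $D\!\left(\sum_{k=n+1}^m u_k,\,\overline{0}\right) \leq \sum_{k=n+1}^m D(u_k,\overline{0})$ for all $m > n$.

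Next, for $m > n$ I would write $s_m = \left(\sum_{k=n+1}^m u_k\right) + s_n$ and $s_n = \overline{0} + s_n$, so that the translation invariance of $D$ in Proposition \ref{pro}(iii) yields $D(s_m, s_n) = D\!\left(\sum_{k=n+1}^m u_k,\,\overline{0}\right)$. Combining this identity with the subadditivity bound of the previous step gives $D(s_m, s_n) \leq \sum_{k=n+1}^m D(u_k,\overline{0})$. Since the series $\sum_k D(u_k,\overline{0})$ of non-negative reals converges by hypothesis, its tails $\sum_{k=n+1}^m D(u_k,\overline{0})$ tend to $0$ as $m,n \to \infty$ by the Cauchy criterion, so $(s_n)$ is Cauchy and hence convergent in the complete space $(E^1, D)$, which is the assertion.

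I expect no step to present a genuine obstacle; the proof is essentially the classical argument that absolute convergence implies convergence, transported to the metric $D$. The only point demanding care is that fuzzy numbers generally lack additive inverses (Lemma \ref{lemma}(ii)), so the reduction of $D(s_m, s_n)$ must not be carried out by ``cancelling'' $s_n$ but must instead pass through the translation invariance of Proposition \ref{pro}(iii); likewise, one should avoid invoking Theorem \ref{triangle}, whose hypothesis already presupposes the convergence we are trying to establish.
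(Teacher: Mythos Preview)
Your argument is correct. Note, however, that the paper does not supply its own proof of this statement: Theorem~\ref{absolute} is quoted in the preliminaries with a citation to \cite{tb} and is used as a black box elsewhere. Your proof is the standard one and is exactly what one would expect in the cited source --- a Cauchy-sequence argument relying on translation invariance (Proposition~\ref{pro}(iii)) to handle the absence of additive inverses, together with completeness of $(E^1,D)$.
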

Ces\`{a}ro, Euler weighted mean methods of summability  and Abel, Borel power series methods of summability for sequences of fuzzy numbers have been defined recently as the following:
\begin{defin}\cite{subrahmanyan}
Let $(u_n)$ be a sequence of fuzzy numbers and let sequence of arithmetic means of $(u_n)$ be defined by $\sigma_n=\frac{1}{n+1}\sum_{k=0}^{n}u_k$. We say that sequence $(u_n)$ is  Ces\`{a}ro summable to fuzzy number a $\mu$ if $\lim\limits_{n\to\infty}\sigma_n=\mu$.
\end{defin}
\begin{defin}\cite{yavuzeuler}
Let $(u_n)$ be a sequence of fuzzy numbers. The Euler means of $(u_n)$ is defined by
\begin{eqnarray*}
t^{p}_n=\frac{1}{(p+1)^n}\sum_{k=0}^n\binom{n}{k}p^{n-k}u_k\qquad\qquad (p>0).
\end{eqnarray*}
We say that $(u_n)$ is $E_p$ summable to a fuzzy number $\mu$ if $\lim\limits_{n\to\infty}t^{p}_n=\mu$.
\end{defin}
\begin{defin}\cite{yavuzabel}
A sequence $(u_n)$ of fuzzy numbers is said to be Abel summable to a fuzzy number $\mu$ if the series $\sum_{n=0}^{\infty}u_nx^n$ converges for all $x\in(0,1)$ and
\begin{eqnarray*}
\lim_{x\to 1^-}(1-x)\sum_{n=0}^{\infty}u_nx^n=\mu.
\end{eqnarray*}
\end{defin}
\begin{defin}\cite{yavuzborel}
A sequence $(u_n)$ of fuzzy numbers is said to be Borel summable to $\mu$ if the series $\sum_{n=0}^{\infty}\frac{x^n}{n!}u_n$ converges for all $x\in(0,\infty)$ and
\begin{eqnarray*}
\lim_{x\to\infty} e^{-x}\sum_{n=0}^{\infty}\frac{x^n}{n!}u_n=\mu.
\end{eqnarray*}
\end{defin}
\section{Comparison between Ces\`{a}ro and Abel methods of summability of sequences of fuzzy numbers}
In the following theorem we give an optimal bound for Ces\`{a}ro summable sequences of fuzzy numbers.
\begin{thm}\label{bound}
If sequence $(u_n)$ of fuzzy numbers is Ces\`{a}ro summable, then $D(u_n,\bar{0})=o(n)$ and this estimate is best possible.
\end{thm}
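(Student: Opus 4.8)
The plan is to reduce everything to a metric estimate relating consecutive partial sums, exploiting only the algebraic rules of Lemma \ref{lemma} and the metric properties of Proposition \ref{pro}. Since fuzzy numbers lack additive inverses (Lemma \ref{lemma}(ii)), the classical identity $u_n=(n+1)\sigma_n-n\sigma_{n-1}$ is unavailable, so I would translate it into a statement purely about the metric $D$.

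First I would set $s_n=\sum_{k=0}^n u_k$ and record, via Lemma \ref{lemma}(v), that $s_n=(n+1)\sigma_n$, while at the same time $s_n=s_{n-1}+u_n=n\sigma_{n-1}+u_n$. Adding the common summand $n\sigma_{n-1}$ and invoking the translation invariance $D(u+v,w+v)=D(u,w)$ of Proposition \ref{pro}(iii) yields the key identity
\[
D(u_n,\bar{0})=D\bigl(u_n+n\sigma_{n-1},\,n\sigma_{n-1}\bigr)=D\bigl((n+1)\sigma_n,\,n\sigma_{n-1}\bigr).
\]
Next, using that $(E^1,D)$ is a metric space (Proposition \ref{pro}(i)), I would insert the scaled limit and apply the triangle inequality:
\[
D\bigl((n+1)\sigma_n,n\sigma_{n-1}\bigr)\le D\bigl((n+1)\sigma_n,(n+1)\mu\bigr)+D\bigl((n+1)\mu,n\mu\bigr)+D\bigl(n\mu,n\sigma_{n-1}\bigr).
\]
By Proposition \ref{pro}(ii) the two outer terms equal $(n+1)D(\sigma_n,\mu)$ and $nD(\sigma_{n-1},\mu)$. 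The middle term is the only place requiring genuine care: because the scalars $n$ and $1$ have the same sign, Lemma \ref{lemma}(iii) gives $(n+1)\mu=n\mu+\mu$, and then Proposition \ref{pro}(iii) yields $D((n+1)\mu,n\mu)=D(\mu,\bar{0})$, a finite constant since $\mu\in E^1$ has compact support. Dividing by $n$ and letting $n\to\infty$, the two distance terms vanish because $\sigma_n\to\mu$, while $D(\mu,\bar{0})/n\to0$; hence $D(u_n,\bar{0})/n\to0$, i.e. $D(u_n,\bar{0})=o(n)$.

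For optimality I would pass to the crisp copy of $\mathbb{R}$ inside $E^1$, where $D(\bar r,\bar{0})=|r|$ and Cesàro summability agrees with the real notion, reducing the matter to a sharp real example. Given any positive sequence $(\lambda_n)$ with $\lambda_n=o(n)$, I would pick indices $n_1<n_2<\cdots$ sparse enough that $\lambda_{n_j}/n_j\le j^{-2}$ and set $a_{n_j}=j\lambda_{n_j}$, $a_{n_j+1}=-j\lambda_{n_j}$, and $a_m=0$ otherwise. The cancellation keeps the partial sums supported only at the isolated spikes, so $\sigma_{n_j}=j\lambda_{n_j}/(n_j+1)\le 1/j\to0$ and $\sigma_m=0$ elsewhere, giving Cesàro summability to $\bar{0}$; yet $D(\bar a_{n_j},\bar{0})/\lambda_{n_j}=j\to\infty$, so no estimate of the form $O(\lambda_n)$ with $\lambda_n=o(n)$ can hold, which is what best possibility means.

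I expect the main obstacle to be precisely the middle term $D((n+1)\mu,n\mu)$: the absence of additive inverses forbids the naive cancellation, and it must be recovered by combining the same-sign distributivity of Lemma \ref{lemma}(iii) with the translation invariance of Proposition \ref{pro}(iii). Once that identity is secured the estimate is immediate, and the optimality construction becomes routine after the reduction to crisp numbers.
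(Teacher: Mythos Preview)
Your argument is correct and runs parallel to the paper's. For the growth estimate the paper also arrives at $D(u_n,\bar 0)=D\bigl((n+1)\sigma_n,\,n\sigma_{n-1}\bigr)$, but then splits $(n+1)\sigma_n=n\sigma_n+\sigma_n$ to get the single-step bound $nD(\sigma_n,\sigma_{n-1})+D(\sigma_n,\bar 0)$, invoking that a convergent sequence is Cauchy and bounded; your three-term triangle inequality through $(n+1)\mu$ and $n\mu$ is an equally valid variant. For optimality the paper likewise uses a spike-and-cancel construction along a sparse subsequence, but builds genuine triangular fuzzy numbers, whereas your reduction to the crisp copy of $\mathbb{R}$ inside $E^1$ is the more economical route and loses nothing. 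One small point to tighten: when choosing the indices $n_j$ you should also impose $n_{j+1}\ge n_j+2$, so that the canceller placed at $n_j+1$ does not collide with the next spike at $n_{j+1}$; since $\lambda_n/n\to 0$ this is compatible with your condition $\lambda_{n_j}/n_j\le j^{-2}$.
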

\begin{proof}
Let sequence $(u_n)$ of fuzzy numbers be Ces\`{a}ro summable to a fuzzy number $\mu$. Then sequence of Ces\`{a}ro means $\sigma_n=\frac{1}{n+1}\sum_{k=0}^nu_k$ converges to $\mu$. From Proposition \ref{pro} we have
\begin{eqnarray*}
D(u_n,\bar{0})=D\left(\sum_{k=0}^nu_k, \sum_{k=0}^{n-1}u_k\right)=D((n+1)\sigma_n, n\sigma_{n-1})\leq nD(\sigma_n,\sigma_{n-1})+D(\sigma_n,\bar{0})
\end{eqnarray*}
and, by dividing both sides with  $n$, we get
\begin{eqnarray*}
\frac{D(u_n,\bar{0})}{n}\leq D(\sigma_n,\sigma_{n-1})+\frac{D(\sigma_n,\bar{0})}{n}\cdot
\end{eqnarray*}
Since $(\sigma_n)$ is a convergent sequence, by limiting both sides we conclude $D(u_n,\bar{0})=o(n)$.

Now we shall show that the estimate $D(u_n,\bar{0})=o(n)$ is best possible. We prove by contradiction. Let estimate $D(u_n,\bar{0})=o\left(\frac{n}{\lambda_n}\right)$ be best possible for Ces\`{a}ro summable sequences $(u_n)$ of fuzzy numbers, where $(\lambda_{n})$ is a sequence of real numbers with $0<\lambda_n\neq O(1)$. Then there exists a subsequence $(\lambda_{n_k})$ of $(\lambda_{n})$ such that $n_{k+1}\geq n_{k}+2$ and $\lambda_{n_k}\uparrow \infty$. Then consider the sequence of fuzzy numbers $(u_n)$ defined by:
\begin{eqnarray*}
u_{n_k}(t)=
\begin{cases}
t-\frac{n_k}{\sqrt{\lambda_{n_k}}},  & \frac{n_k}{\sqrt{\lambda_{n_k}}}\leq t\leq \frac{n_k}{\sqrt{\lambda_{n_k}}}+1\\
2-t+\frac{n_k}{\sqrt{\lambda_{n_k}}}, &\frac{n_k}{\sqrt{\lambda_{n_k}}}+1\leq t\leq \frac{n_k}{\sqrt{\lambda_{n_k}}}+2\\
0,& otherwise,
\end{cases}\\
u_{n_k+1}(t)=\begin{cases}
t+\frac{n_k}{\sqrt{\lambda_{n_k}}},  & -\frac{n_k}{\sqrt{\lambda_{n_k}}}\leq t\leq 1-\frac{n_k}{\sqrt{\lambda_{n_k}}}\\
2-t-\frac{n_k}{\sqrt{\lambda_{n_k}}}, &1-\frac{n_k}{\sqrt{\lambda_{n_k}}}\leq t\leq 2-\frac{n_k}{\sqrt{\lambda_{n_k}}}\\
0,& otherwise,
\end{cases}
\end{eqnarray*}
for $n=n_k$, $n=n_{k}+1$  and
\begin{eqnarray*}
u_{n}(t)=
\begin{cases}
t, \quad & 0\leq t\leq 1\\
2-t, \quad & 1 \leq t\leq 2\\
0,& otherwise
\end{cases}
\end{eqnarray*}
for $n\neq \{n_k, n_k+1\}$. Then $\alpha-$level set of $(u_n)$ is
\begin{eqnarray*}
[u_{n_k}]_{\alpha} = \left[\alpha +\frac{n_k}{\sqrt{\lambda_{n_k}}}, 2-\alpha +\frac{n_k}{\sqrt{\lambda_{n_k}}}\right], \quad
[u_{n_k+1}]_{\alpha}=\left[\alpha -\frac{n_k}{\sqrt{\lambda_{n_k}}}, 2-\alpha -\frac{n_k}{\sqrt{\lambda_{n_k}}}\right]
\end{eqnarray*}
for $n=n_k$, $n=n_k+1$ and $[u_{n}]_{\alpha}=[\alpha, 2-\alpha]$ for $n\neq \{n_k, n_k+1\}$.
So $\alpha-$level set of Ces\`{a}ro means $(\sigma_n)$ is
\begin{eqnarray*}
[\sigma_{n_k}]_{\alpha}=\left[\alpha +\frac{n_k}{(n_k+1)\sqrt{\lambda_{n_k}}}, 2-\alpha +\frac{n_k}{(n_k+1)\sqrt{\lambda_{n_k}}}\right]
\end{eqnarray*}%
for $n=n_k$ and $[\sigma_{n}]_{\alpha}=\left[\alpha, 2-\alpha\right]$ for $n\neq n_k$. Thus we conclude that sequence $(u_n)$ is Ces\`{a}ro summable to fuzzy number
\begin{eqnarray*}
\mu(t)=
\begin{cases}
t, \quad & 0\leq t\leq 1\\
2-t, \quad & 1 \leq t\leq 2\\
0,& otherwise.
\end{cases}
\end{eqnarray*}
However we have
\begin{eqnarray*}
\frac{\lambda_{n_k}}{n_k}D(u_{n_k},\bar{0})=\frac{2\lambda_{n_k}}{n_k}+\sqrt{\lambda_{n_k}}\to\infty
\end{eqnarray*}
as $k\to\infty$, which contradicts with the assumption $D(u_n,\bar{0})=o\left(\frac{n}{\lambda_n}\right)$. The proof is completed.
\end{proof}
Now we prove a theorem dealing with multiplication of infinite series of fuzzy numbers, which is analogous to Mertens' theorem that in classical analysis.
\begin{thm}\label{mertens}
Let $\sum_{n=0}^{\infty} u_n$ be a convergent series of fuzzy numbers. If $\sum_{n=0}^{\infty} x_n$ is a convergent series with non-negative real terms, then
\begin{eqnarray*}
\left(\sum_{n=0}^{\infty} x_n\right)\left(\sum_{n=0}^{\infty} u_n\right)=\sum_{n=0}^{\infty}\sum_{k=0}^nu_kx_{n-k}.
\end{eqnarray*}
\end{thm}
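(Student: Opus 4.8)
The plan is to imitate the classical proof of Mertens' theorem, but to run it entirely through the metric $D$, since the fundamental obstruction in $E^1$ is that fuzzy numbers have no additive inverses (Lemma \ref{lemma}(ii)); in particular one cannot write $\sum_{k=0}^m u_k = U + (\text{error})$ and estimate the error additively as in the real case. Write $U=\sum_{n=0}^\infty u_n$, $B_m=\sum_{k=0}^m u_k$, $X=\sum_{n=0}^\infty x_n$ and $A_m=\sum_{k=0}^m x_k$, so that $B_m\to U$ in $(E^1,D)$ and $A_m\to X$ in $\mathbb{R}$; denote by $C_N=\sum_{n=0}^N\sum_{k=0}^n u_kx_{n-k}$ the $N$-th partial sum of the Cauchy product. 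The goal is to prove $C_N\to XU$, and I would do this by inserting the intermediate term $A_NU$ and controlling $D(C_N,A_NU)$ and $D(A_NU,XU)$ separately.

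First I would rearrange the finite double sum defining $C_N$. Collecting, for each fixed index $j$, all contributions carrying the factor $x_j$, the non-negativity of the $x_n$ is exactly what licenses the use of Lemma \ref{lemma}(iii)--(iv) (equivalently Remark \ref{change}), giving
\begin{eqnarray*}
C_N=\sum_{j=0}^N x_j B_{N-j}.
\end{eqnarray*}
Since $x_j\ge 0$, the same distributivity yields $A_NU=\sum_{j=0}^N x_j U$. This is the step where the hypothesis on the signs of $x_n$ is indispensable, because Lemma \ref{lemma}(iii) fails for scalars of mixed sign.

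Next, using Proposition \ref{pro}(iv) term by term and then Proposition \ref{pro}(ii), I would estimate
\begin{eqnarray*}
D(C_N,A_NU)\le\sum_{j=0}^N D(x_jB_{N-j},x_jU)=\sum_{j=0}^N x_j\,D(B_{N-j},U).
\end{eqnarray*}
Given $\varepsilon>0$, choose $m_0$ with $D(B_m,U)<\varepsilon$ for $m\ge m_0$, and let $M=\sup_m D(B_m,U)<\infty$ (finite since $B_m\to U$). Splitting the sum at $j=N-m_0$ bounds it by $\varepsilon A_{N-m_0}+M\sum_{j=N-m_0+1}^N x_j\le \varepsilon X+M\sum_{j=N-m_0+1}^\infty x_j$; the tail tends to $0$ as $N\to\infty$ because $\sum x_n$ converges, so $\limsup_N D(C_N,A_NU)\le \varepsilon X$ and hence $D(C_N,A_NU)\to 0$. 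Finally, comparing level sets gives $D(A_NU,XU)=|A_N-X|\,D(U,\bar{0})\to 0$, and the triangle inequality $D(C_N,XU)\le D(C_N,A_NU)+D(A_NU,XU)$ finishes the proof. I expect the only genuinely delicate points to be the justification of the rearrangement under the sign hypothesis and the two-piece tail estimate; everything else is a routine transcription of the real-variable argument into $D$.
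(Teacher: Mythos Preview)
Your proof is correct and follows essentially the same route as the paper's: both rearrange $C_N$ into $\sum_{j=0}^N x_j B_{N-j}$ via Remark~\ref{change}, compare with $\sum_{j=0}^N x_j U$ (your $A_NU$), and then handle the remaining tail $\sum_{j>N} x_j U$. The only cosmetic difference is that the paper bounds the finitely many ``bad'' terms $\sum_{j=N-m_0+1}^{N} x_j D(B_{N-j},U)$ using $x_n\to 0$ together with $\max_{0\le k\le m_0} D(B_k,U)$, whereas you use the uniform bound $M=\sup_m D(B_m,U)$ together with the series tail $\sum_{j>N-m_0} x_j\to 0$; both are equally valid.
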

\begin{proof}
Let $\sum_{n=0}^{\infty} u_n$ be a convergent series of fuzzy numbers and $\sum_{n=0}^{\infty} x_n$ be a convergent series with non-negative real terms. Then there exist $U\in E^1$ and $X\in \mathbb{R}$ such that $U_n=\sum_{k=0}^{n} u_k\rightarrow U$ and $X_n=\sum_{k=0}^{n} x_k\rightarrow X$ are satisfied.  Hence for given any $\varepsilon>0$
\begin{itemize}
\item[(i)] there exists $n_0\in\mathbb{N}$ such that $D(U_n, U)<\frac{\varepsilon}{3(X+1)}$ whenever $n>n_0$,\\[0.3mm]
\item[(ii)] there exists  $n_1\in\mathbb{N}$ such that for  $n>n_1$ we have $x_n<\frac{\varepsilon}{3\left\{(n_0+1)\max\limits_{0\leq k\leq n_0}\{D(U_k,U)\}+1\right\}}$\\[0.3mm]
\item[(iii)] there exists $n_2\in\mathbb{N}$ such that for $n>n_2$ we have $\sum_{k=n+1}^{\infty}x_k<\frac{\varepsilon}{3(D(U,\bar{0})+1)}\cdot$
\end{itemize}
On the other hand by Remark \ref{change} we have
\begin{eqnarray*}
\sum_{n=0}^{m}\sum_{k=0}^{n}u_kx_{n-k}=\sum_{n=0}^{m}\sum_{k=0}^{n}x_ku_{n-k}=\sum_{k=0}^{m}x_k\sum_{n=k}^{m}u_{n-k}=\sum_{k=0}^{m}x_k\sum_{n=0}^{m-k}u_{n}=\sum_{k=0}^{m}x_kU_{m-k}.
\end{eqnarray*}
Since
\begin{eqnarray*}
D\bigg(\sum_{n=0}^{m}\sum_{k=0}^{n}u_kx_{n-k}, XU\bigg)&=&D\left(\sum_{k=0}^{m}x_kU_{m-k}, \sum_{k=0}^{\infty}x_kU\right)
\\&=&
D\left(\sum_{k=0}^{m}x_kU_{m-k}, \sum_{k=0}^{m}x_kU+\sum_{k=m+1}^{\infty}x_kU\right)
\\&\leq&
D\left(\sum_{k=0}^{m}x_kU_{m-k}, \sum_{k=0}^{m}x_kU\right)+D\left(\sum_{k=m+1}^{\infty}x_kU, \bar{0}\right)
\\&\leq&
\sum_{k=0}^{m-n_0-1}x_kD(U_{m-k},U)+\sum_{k=m-n_0}^{m}x_kD(U_{m-k},U)
+D\left(\sum_{k=m+1}^{\infty}x_kU, \bar{0}\right),
\end{eqnarray*}
we get
\begin{eqnarray*}
D\left(\sum_{n=0}^{m}\sum_{k=0}^{n}u_kx_{n-k}, XU\right)< \varepsilon
\end{eqnarray*}
whenever $m>\max\{n_0+n_1,n_2\}$, and this completes the proof.
\end{proof}
\begin{thm}\label{cesaroabel}
If sequence $(u_n)$ of fuzzy numbers is Ces\`{a}ro summable to fuzzy number $\mu$, then  $(u_n)$ is Abel summable to $\mu$.
\end{thm}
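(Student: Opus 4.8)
The plan is to reduce the statement to the classical fact that the Abel mean of a sequence is a regular, limit-preserving average of its Ces\`{a}ro means, and to carry this out entirely through the metric $D$ so as to avoid subtracting fuzzy numbers, which is unavailable by Lemma \ref{lemma}(ii). First I would check that the Abel series is well defined: since $(u_n)$ is Ces\`{a}ro summable, Theorem \ref{bound} gives $D(u_n,\bar{0})=o(n)$, so $D(u_n,\bar{0})\le Cn$ for large $n$; using Proposition \ref{pro}(ii) we have $D(u_nx^n,\bar{0})=x^nD(u_n,\bar{0})$, whence $\sum_n D(u_nx^n,\bar{0})=\sum_n x^nD(u_n,\bar{0})<\infty$ for each $x\in(0,1)$. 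Theorem \ref{absolute} then guarantees that $\sum_n u_nx^n$ converges for every $x\in(0,1)$.

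Next comes the key algebraic identity. Writing $s_n=\sum_{k=0}^n u_k=(n+1)\sigma_n$, I would first interchange the order of summation in the finite partial sum using Remark \ref{change}, whose validity rests only on the nonnegativity of the scalar weights $x^n$ and on Lemma \ref{lemma}(iii)--(iv):
$$\sum_{n=0}^N x^n s_n=\sum_{k=0}^N u_k\sum_{n=k}^N x^n.$$
Letting $N\to\infty$ -- legitimate because $D(s_n,\bar{0})=O(n)$ so the relevant fuzzy series converge in the sense of Theorem \ref{absolute} -- and summing the geometric tails yields $(1-x)\sum_{n=0}^\infty x^n s_n=\sum_{n=0}^\infty u_nx^n$. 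Substituting $s_n=(n+1)\sigma_n$ gives the crucial representation
$$(1-x)\sum_{n=0}^\infty u_nx^n=\sum_{n=0}^\infty p_n(x)\,\sigma_n,\qquad p_n(x):=(1-x)^2(n+1)x^n,$$
where the weights satisfy $p_n(x)\ge 0$ and $\sum_{n=0}^\infty p_n(x)=1$ by the elementary real identity $\sum_n(n+1)x^n=(1-x)^{-2}$.

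Finally I would run the regularity argument in the metric $D$. Because $\sum_n p_n(x)=1$ with all $p_n(x)\ge 0$, the series form of Lemma \ref{lemma}(iii) gives $\sum_n p_n(x)\mu=\mu$, so by Theorem \ref{triangle} and Proposition \ref{pro}(ii),
$$D\Big((1-x)\sum_{n=0}^\infty u_nx^n,\ \mu\Big)=D\Big(\sum_n p_n(x)\sigma_n,\ \sum_n p_n(x)\mu\Big)\le\sum_{n=0}^\infty p_n(x)\,D(\sigma_n,\mu).$$
Given $\varepsilon>0$, I choose $N$ with $D(\sigma_n,\mu)<\varepsilon/2$ for $n>N$, possible since $\sigma_n\to\mu$; the tail $\sum_{n>N}p_n(x)D(\sigma_n,\mu)$ is then $\le\varepsilon/2$, while the finite head $\sum_{n\le N}p_n(x)D(\sigma_n,\mu)$ tends to $0$ as $x\to1^-$ because each $p_n(x)\to0$. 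Hence the left-hand side is $<\varepsilon$ for $x$ sufficiently close to $1$, which is precisely Abel summability to $\mu$.

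The main obstacle is not the limiting argument, which mirrors the scalar case, but making the two preparatory steps rigorous without subtraction: justifying the passage from the finite interchange of Remark \ref{change} to the infinite-series identity, and ensuring that every combination of terms uses only nonnegative scalars, so that the distributivity of Lemma \ref{lemma}(iii)--(iv) and the contraction property of Proposition \ref{pro}(ii) remain applicable. Keeping the fuzzy series under control throughout -- via $D(\sigma_n,\bar{0})$ bounded and $D(s_n,\bar{0})=O(n)$ together with Theorem \ref{absolute} -- is what legitimizes these manipulations.
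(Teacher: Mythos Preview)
Your argument is correct and follows the same three-step skeleton as the paper --- establish convergence of $\sum u_nx^n$ via Theorem~\ref{bound} and Theorem~\ref{absolute}, rewrite $(1-x)\sum u_nx^n$ as $(1-x)^2\sum (n+1)\sigma_nx^n$, then invoke regularity --- but the execution of the last two steps differs. For the algebraic identity, the paper applies its Mertens-type Theorem~\ref{mertens} to the product $\big(\sum x^n\big)\big(\sum u_nx^n\big)$, whereas you start from the finite interchange of Remark~\ref{change} and pass to the limit; your passage to the limit is legitimate (it is, in effect, the special case of Lemma~\ref{laz�m}'s estimate with the decomposition $\frac{x^k}{1-x}=\sum_{n=k}^{N}x^n+\frac{x^{N+1}}{1-x}$ and Lemma~\ref{lemma}(iii)), though the one-line justification you give deserves to be expanded. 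For the regularity step, the paper simply cites the $(J,p)$ power series method of Sezer and \c{C}anak, while you carry out the Toeplitz-style $\varepsilon$--argument explicitly in the metric $D$ using Theorem~\ref{triangle} and the identity $\sum_n p_n(x)\mu=\mu$. Your route is therefore more self-contained, avoiding both Theorem~\ref{mertens} and the external reference; the paper's route is shorter because it delegates to results already on the shelf.
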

\begin{proof}
Let $(u_n)$ be Ces\`{a}ro summable to a fuzzy number $\mu$. We want to show that series $\sum u_nx^n$ of fuzzy numbers is convergent for $x\in(0,1)$, and
\begin{eqnarray*}
\lim_{x\to1^-}(1-x)\sum_{n=0}^{\infty}u_nx^n=\mu.
\end{eqnarray*}
From Theorem \ref{bound} we have $D(u_n,\bar{0})=o(n)$ and as result we get
\begin{eqnarray*}
\sum_{n=0}^{\infty}D(u_nx^n,\bar{0})&\leq&\sum_{n=0}^{\infty}D(u_n,\bar{0})x^n\leq \sum_{n=0}^{\infty}nx^n=\frac{x}{(1-x)^2}
\end{eqnarray*}
where $x\in(0,1)$. So by Theorem \ref{absolute}, series $\sum u_nx^n$ of fuzzy numbers is convergent for $x\in(0,1)$. Besides, from Theorem \ref{mertens} we get
\begin{eqnarray*}
(1-x)\sum_{n=0}^{\infty}u_nx^n&=&(1-x)^2\frac{1}{1-x}\sum_{n=0}^{\infty}u_nx^n=(1-x)^2\left(\sum_{n=0}^{\infty}x^n\right)\left(\sum_{n=0}^{\infty}u_nx^n\right)
\\&=&
(1-x)^2\sum_{n=0}^{\infty}s_nx^n=(1-x)^2\sum_{n=0}^{\infty}(n+1)\sigma_nx^n.
\end{eqnarray*}
At this point we recall the power series method $(J,p)$ introduced by Sefa and \c{C}anak\cite{sefapower}. Since sequence $(\sigma_n)$ of Ces\`{a}ro means converges to  $\mu$ and summability method $(J,n+1)$ is regular we have
\begin{eqnarray*}
\lim_{x\to1^-}(1-x)^2\sum_{n=0}^{\infty}(n+1)\sigma_nx^n=\mu,
\end{eqnarray*}
from which we conclude
\begin{eqnarray*}
\lim_{x\to1^-}(1-x)\sum_{n=0}^{\infty}u_nx^n=\mu.
\end{eqnarray*}
\end{proof}
However Abel summable sequences of fuzzy number do not have to be Ces\`{a}ro summable, which can be seen by following example.
\begin{ex}
Consider sequence $u=(u_n)$ of fuzzy numbers such that

\begin{eqnarray*}
u_n(t)=
\begin{cases}
2\sqrt[n]{t+(-1)^{n+1}n}, &  if \  (-1)^nn\leq t\leq (-1)^nn+\frac{1}{2^n}, \\[0.1 cm]
1 , & if \  (-1)^nn+\frac{1}{2^n}\leq t\leq (-1)^nn+2-\frac{1}{2^n}, \\[0.1 cm]
2\sqrt[n]{(-1)^{n}n+2-t} , & if \ (-1)^nn+2-\frac{1}{2^n}\leq t\leq (-1)^nn+2, \\[0.1 cm]
0, & \qquad  \quad otherwise
\end{cases}
\end{eqnarray*}
for $n\geq 1$ and $u_0=\bar{1}$. Since
\begin{eqnarray*}
\sum\limits_{n=0}^{\infty} u^-_n(\alpha)x^n&=&\sum\limits_{n=0}^{\infty}\left\{(-1)^nn+\left(\frac{\alpha}{2}\right)^n\right\}x^n=\frac{-x}{(1+x)^2}+\frac{2}{2-\alpha x}
\\
\sum\limits_{n=0}^{\infty} u^+_n(\alpha)x^n&=&\sum\limits_{n=0}^{\infty}\left\{(-1)^nn+2-\left(\frac{\alpha}{2}\right)^n\right\}x^n= \frac{-x}{(1+x)^2}+\frac{2}{1-x}-\frac{2}{2-\alpha x}
\end{eqnarray*}
converges uniformly in $\alpha$ where $0<x<1$, series $\sum u_nx^n$ is convergent by Definition \ref{series}. Then considering the fuzzy number $\mu$, where $[\mu]_\alpha=[0,2]$, we get
\begin{eqnarray*}
D\left((1-x)\sum u_nx^n, \mu\right)&=&
\sup_{\alpha\in[0,1]}\max\left\{\left|\frac{-(1-x)x}{(1+x)^2}+\frac{2(1-x)}{2-\alpha x}\right|,
\left|\frac{-(1-x)x}{(1+x)^2}+\frac{2(1-x)}{1-x}-\frac{2(1-x)}{2-\alpha x}-2\right|\right\}
\\&=&
\sup_{\alpha\in[0,1]}\left|\frac{-(1-x)x}{(1+x)^2}-\frac{2(1-x)}{2-\alpha x}\right|=\frac{(1-x)x}{(1+x)^2}+\frac{2(1-x)}{2-\alpha x}
\end{eqnarray*}
and so $\lim_{x\to 1^-}(1-x)\sum u_nx^n=\mu$.
Hence sequence $(u_n)$ of fuzzy numbers is Abel summable to fuzzy number
\begin{eqnarray*}
\mu(t)=
\begin{cases}
1, \quad & 0\leq t\leq 2\\
0,& otherwise,
\end{cases}
\end{eqnarray*}
but is not  Ces\`{a}ro summable to any fuzzy number.
\end{ex}
\section{Comparison between Euler and Borel methods of summability of sequences of fuzzy numbers}
\begin{thm}\label{transform}
Let $(u_n)$ be a sequence of fuzzy numbers. Then $q$-th order Euler means of $p$-th order Euler means of $(u_n)$ is $(p+q+pq)$-th order Euler means of $(u_n)$.
\end{thm}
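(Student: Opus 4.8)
The plan is to compute the $q$-th order Euler mean of the sequence $(t^p_n)$ directly from the definition and then simplify it into the $(p+q+pq)$-th order Euler mean of $(u_n)$. Writing $s_n$ for the $q$-th order Euler mean of $(t^p_k)$, I would first substitute the definition of $t^p_k$ into
\begin{eqnarray*}
s_n=\frac{1}{(q+1)^n}\sum_{k=0}^n\binom{n}{k}q^{n-k}t^p_k=\frac{1}{(q+1)^n}\sum_{k=0}^n\binom{n}{k}q^{n-k}\frac{1}{(p+1)^k}\sum_{m=0}^k\binom{k}{m}p^{k-m}u_m,
\end{eqnarray*}
obtaining a double sum of the fuzzy numbers $u_m$ weighted by positive scalars built from $\binom{n}{k}$, $\binom{k}{m}$, $q^{n-k}$, $p^{k-m}$ and $(p+1)^{-k}$.

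The second step is to interchange the order of summation, turning $\sum_{k=0}^n\sum_{m=0}^k$ into $\sum_{m=0}^n\sum_{k=m}^n$, and to collect the scalar coefficient of each fixed $u_m$. Because every scalar that appears is positive, the distributivity and associativity furnished by parts $(iii)$ and $(iv)$ of Lemma \ref{lemma} allow me to recombine terms $c\,u_m+c'u_m=(c+c')u_m$ legitimately. This recombination is the only genuinely fuzzy-specific point in the argument, and it is valid precisely because the sign hypothesis of Lemma \ref{lemma}$(iii)$ is satisfied; over a space where subtraction fails this is exactly the subtlety one must guard against.

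The third step is the purely scalar evaluation of the inner coefficient. Using the identity $\binom{n}{k}\binom{k}{m}=\binom{n}{m}\binom{n-m}{k-m}$ and the substitution $j=k-m$, the inner sum becomes $\binom{n}{m}(p+1)^{-m}\sum_{j=0}^{n-m}\binom{n-m}{j}q^{n-m-j}\bigl(\tfrac{p}{p+1}\bigr)^{j}$, which by the binomial theorem equals $\binom{n}{m}(p+1)^{-m}\bigl(q+\tfrac{p}{p+1}\bigr)^{n-m}$. Since $q+\tfrac{p}{p+1}=\tfrac{p+q+pq}{p+1}$, this collapses to $\binom{n}{m}(p+q+pq)^{n-m}(p+1)^{-n}$, independent of the original summation range.

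Finally I would assemble the pieces. The surviving factor $(q+1)^{-n}$ combines with $(p+1)^{-n}$, and the key algebraic observation $(p+1)(q+1)=1+p+q+pq$ yields $(q+1)^n(p+1)^n=(p+q+pq+1)^n$. Hence
\begin{eqnarray*}
s_n=\frac{1}{(p+q+pq+1)^n}\sum_{m=0}^n\binom{n}{m}(p+q+pq)^{n-m}u_m=t^{p+q+pq}_n,
\end{eqnarray*}
which is the assertion. None of the steps poses a serious obstacle beyond careful bookkeeping of the triangular double sum; the one point demanding genuine care is the interchange and recombination of the fuzzy terms, which succeeds only because all the weights are non-negative.
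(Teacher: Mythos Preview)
Your proposal is correct and follows essentially the same route as the paper: substitute, interchange the double sum using the non-negativity of all scalar weights, collapse the inner sum via $\binom{n}{k}\binom{k}{m}=\binom{n}{m}\binom{n-m}{k-m}$ and the binomial theorem, and finish with $(p+1)(q+1)=p+q+pq+1$. The only cosmetic difference is that the paper packages the interchange step by invoking Remark~\ref{change}, whereas you appeal directly to Lemma~\ref{lemma}(iii)--(iv); these are the same justification.
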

\begin{proof}
Let $(u_n)$ be a sequence of fuzzy numbers and $(t^{p}_n)$ be the sequence of $p$-th order Euler means of $(u_n)$. Then sequence of $q$-th order Euler means of $(t^{p}_n)$ is
\begin{eqnarray*}
t^{q}_{n}(t_n^p)&=&\frac{1}{(q+1)^n}\sum_{k=0}^{n}\binom{n}{k}q^{n-k}t_n^p
\\&=&
\frac{1}{(q+1)^n}\sum_{k=0}^{n}\binom{n}{k}q^{n-k}\frac{1}{(p+1)^k}\sum_{m=0}^{k}\binom{k}{m}p^{k-m}u_m
\\&=&
\frac{1}{(q+1)^n}\sum_{m=0}^{n}u_m\sum_{k=m}^{n}\binom{n}{k}\binom{k}{m}q^{n-k}p^{k-m}\frac{1}{(p+1)^k}
\\&=&
\frac{1}{(q+1)^n}\sum_{m=0}^{n}\binom{n}{m}\frac{u_m}{(p+1)^m}\sum_{k=0}^{n-m}\binom{n-m}{k}\left(\frac{p}{p+1}\right)^{k}q^{n-m-k}
\\&=&
\frac{1}{(q+1)^n}\sum_{m=0}^{n}\binom{n}{m}\left(\frac{1}{p+1}\right)^m\left(\frac{pq+p+q}{p+1}\right)^{n-m}u_m
\\&=&
\frac{1}{(pq+p+q+1)^n}\sum_{m=0}^{n}\binom{n}{m}(pq+p+q)^{n-m}u_m
\\&=&
t_n^{pq+p+q}
\end{eqnarray*}
in view of Remark \ref{change}, which completes the proof.
\end{proof}
\begin{thm}
If sequence $(u_n)$ of fuzzy numbers is $E_p$ summable to a fuzzy number $\mu$ , and $s>p>0$, then it is $E_{s}$ summable to $\mu$.
\end{thm}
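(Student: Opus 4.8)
The plan is to reduce the claim to the composition identity of Theorem \ref{transform} together with the regularity (limit-preservation) of a single Euler transform. Given $s>p>0$, the first step is to find an Euler order $q>0$ such that applying the $q$-th order Euler transform to the $p$-th order means reproduces the $s$-th order means. By Theorem \ref{transform} this requires $pq+p+q=s$, and solving for $q$ gives $q=\frac{s-p}{p+1}$, which is strictly positive precisely because $s>p$ and $p+1>0$. Hence $q$ is an admissible Euler order, and no case distinction is needed.

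With this choice, Theorem \ref{transform} yields $t_n^{q}(t^p)=t_n^{p+q+pq}=t_n^{s}$, so the $s$-th order Euler means of $(u_n)$ are exactly the $q$-th order Euler means of the sequence $(t_n^p)$ of $p$-th order means. Since $(u_n)$ is $E_p$ summable to $\mu$, the sequence $(t_n^p)$ converges to $\mu$ in the metric $D$. It therefore suffices to show that the Euler transform $E_q$ sends a $D$-convergent sequence to a sequence converging to the same limit, and then to apply this to the convergent sequence $(t_n^p)$.

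For this regularity step I would argue directly from the properties of $D$. Writing $c_{nk}=\frac{1}{(q+1)^n}\binom{n}{k}q^{n-k}$, the weights are non-negative with $\sum_{k=0}^n c_{nk}=1$, so by Lemma \ref{lemma}(iii) one may write $\mu=\sum_{k=0}^n c_{nk}\mu$ and, using Proposition \ref{pro}(ii),(iv),
\begin{eqnarray*}
D\big(t_n^q(t^p),\mu\big)\leq \sum_{k=0}^n c_{nk}\,D(t_k^p,\mu).
\end{eqnarray*}
Splitting the sum at a fixed index $N$ beyond which $D(t_k^p,\mu)$ is small, the tail is bounded using $\sum_{k=0}^n c_{nk}=1$, while the head tends to $0$ because $c_{nk}\to 0$ as $n\to\infty$ for each fixed $k$ (the geometric factor $\big(\tfrac{q}{q+1}\big)^n$ beats the polynomial growth of $\binom{n}{k}$). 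This yields $t_n^q(t^p)\to\mu$, that is $t_n^s\to\mu$, which is exactly $E_s$ summability to $\mu$.

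The only genuine obstacle is establishing the regularity of $E_q$; once that is available the result is immediate from the algebraic identity $q=\frac{s-p}{p+1}$. I expect the regularity to follow from the Toeplitz-type estimate sketched above, and it may already be on record in the foundational study \cite{yavuzeuler}, in which case the argument collapses to choosing $q=\frac{s-p}{p+1}$ and invoking Theorem \ref{transform} on the convergent sequence $(t_n^p)$.
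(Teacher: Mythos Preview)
Your argument is correct and matches the paper's proof essentially line for line: the paper also sets $q=\frac{s-p}{p+1}$, invokes Theorem \ref{transform} to get $t_n^{s}=t_n^{q}(t_n^{p})$, and then appeals to the regularity of the Euler method on the convergent sequence $(t_n^p)$. The only difference is that the paper simply cites regularity (from \cite{yavuzeuler}) rather than sketching the Toeplitz estimate, so your extra detail is fine but not needed.
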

\begin{proof}
Let $s>p>0$ and let sequence $(u_n)$ of fuzzy numbers be $E_p$ summable to a fuzzy number $\mu$. Then sequence $(t^{p}_n)$ of Euler means of $(u_n)$ converges to $\mu$. Besides it follows from Theorem \ref{transform} that $t_n^{s}=t^{\frac{s-p}{p+1}}_{n}(t_n^p)$. By regularity of Euler summability method we conclude that $(t^{s}_n)\to \mu$ and this completes the proof.
\end{proof}
But $E_{s}$ summable sequences are not necessarily $E_{p}$ summable for  $s>p>0$, which can be seen by following example.
\begin{ex}
Let $(u_n)$  be a sequence of fuzzy number  such that
\begin{eqnarray*}
u_n(t)=
\begin{cases}
2\ \sqrt[n]{t-(-p-s-1)^n}, &(-p-s-1)^n\leq t\leq (-p-s-1)^n+\frac{1}{2^n}\\
1, &(-p-s-1)^n+\frac{1}{2^n}\leq t\leq(-p-s-1)^n+1\\
(-p-s-1)^n+2-t, &(-p-s-1)^n+1\leq t\leq (-p-s-1)^n+2\\
0,& (otherwise)
\end{cases}
\end{eqnarray*}
for $n\geq1$ and $[u_0]_{\alpha}=[2,3-\alpha]$. Then
\begin{eqnarray*}
[u_n]_{\alpha}=\left[(-p-s-1)^n+\left(\frac{\alpha}{2}\right)^n, (-p-s-1)^n+2-\alpha \right].
\end{eqnarray*}
So $\alpha-$level set of sequence $t^s_n$ of $s$-th order Euler means is
\begin{eqnarray*}
\left[t^s_n\right]_{\alpha}&=&\left[\frac{1}{(s+1)^n}\sum_{k=0}^n\binom{n}{k}s^{n-k}\left\{(-p-s-1)^k+\left(\frac{\alpha}{2}\right)^k\right\},
\frac{1}{(s+1)^n}\sum_{k=0}^n\binom{n}{k}s^{n-k}\left\{(-p-s-1)^k+2-\alpha\right\}\right]\\
&=&\left[(-1)^n\frac{(p+1)^n}{(s+1)^n}+\left(\frac{2s+\alpha}{2s+2}\right)^n, (-1)^n\frac{(p+1)^n}{(s+1)^n}+2-\alpha\right].
\end{eqnarray*}
Hence $D\left(t^s_n,\mu\right)=\frac{(p+1)^n}{(s+1)^n}+\left(\frac{2s+1}{2s+2}\right)^n\to 0$ where
$$\mu(t)=
\begin{cases}
t, \quad &0\leq t\leq 1\\
2-t, \quad &1\leq t\leq 2\\
0,& (otherwise).
\end{cases}$$
So we conclude that sequence $(u_n)$ is $E_s$ summable to fuzzy number $\mu$. Now let investigate the $E_p$ summabiltiy of $(u_n)$. $\alpha-$level set of sequence $t^p_n$ of $p$-th order Euler means is
\begin{eqnarray*}
\left[t^p_n\right]_{\alpha}&=&\left[\frac{1}{(p+1)^n}\sum_{k=0}^n\binom{n}{k}p^{n-k}\left\{(-p-s-1)^k+\left(\frac{\alpha}{2}\right)^k\right\},\frac{1}{(p+1)^n}\sum_{k=0}^n\binom{n}{k}p^{n-k}\left\{(-p-s-1)^k+2-\alpha\right\}\right]
\\&=&
\left[(-1)^n\frac{(s+1)^n}{(p+1)^n}+\left(\frac{2p+\alpha}{2p+2}\right)^n, (-1)^n\frac{(s+1)^n}{(p+1)^n}+2-\alpha\right]
\end{eqnarray*}%
and then sequence $(u_n)$ is not $E_p$ summable to any number $\mu$ since sequence $\left[t^p_n\right]_{\alpha}$ is not convergent.
\end{ex}
Now we prove a lemma which is necessary to achieve the goal of this section.
\begin{lem}\label{lazým}
Let $\sum_{n=0}^{\infty} u_n$ be a convergent series of fuzzy numbers. If $\sum_{n=0}^{\infty} x_n$ is a convergent series with non-negative real terms, then
\begin{eqnarray*}
\lim_{n\to\infty}\sum_{k=0}^nu_k\sum_{v=k}^nx_v=\sum_{k=0}^{\infty}u_k\sum_{v=k}^{\infty}x_v.
\end{eqnarray*}
\end{lem}
\begin{proof}
Let $\sum_{n=0}^{\infty} u_n$ be a convergent series of fuzzy numbers and $\sum_{n=0}^{\infty} x_n$ be a convergent series with non-negative real terms. Then we have
\begin{eqnarray*}
D\left(\sum_{k=0}^nu_k\sum_{v=k}^nx_v, \sum_{k=0}^{\infty}u_k\sum_{v=k}^{\infty}x_v\right)&=&
D\left(\sum_{k=0}^nu_k\sum_{v=k}^nx_v,\sum_{k=0}^nu_k\sum_{v=k}^nx_v+ \sum_{k=0}^nu_k\sum_{v=n+1}^{\infty}x_v+\sum_{k=n+1}^{\infty}u_k\sum_{v=k}^{\infty}x_v\right)
\\&\leq&
D\left(\sum_{k=0}^nu_k\sum_{v=n+1}^{\infty}x_v, \bar{0}\right)+D\left(\sum_{k=n+1}^{\infty}u_k\sum_{v=k}^{\infty}x_v, \bar{0}\right)
\\&\leq&
\left\{\sum_{v=n+1}^{\infty}x_v\right\}D\left(\sum_{k=0}^nu_k, \bar{0}\right)+\left\{\sum_{v=0}^{\infty}x_v\right\}D\left(\sum_{k=n+1}^{\infty}u_k, \bar{0}
\right).
\end{eqnarray*}
Since series $\sum_{n=0}^{\infty} u_n$ and $\sum_{n=0}^{\infty} x_n$ are convergent, both of series are bounded and corresponding remainder terms converge to $0$ as $n\to\infty$. So by limiting both sides of the expression above we get
\begin{eqnarray*}
\lim_{n\to\infty} D\left(\sum\limits_{k=0}^nu_k\sum\limits_{v=k}^nx_v, \sum\limits_{k=0}^{\infty}u_k\sum\limits_{v=k}^{\infty}x_v\right)=0,
\end{eqnarray*}
and the proof is completed.
\end{proof}
\begin{thm}\label{eulerborel}
If sequence $(u_n)$ of fuzzy numbers is $E_p$ summable to a fuzzy number $\mu$ , then it is Borel summable to $\mu$.
\end{thm}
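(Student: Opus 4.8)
The plan is to reduce Borel summability of $(u_n)$ to the assumed convergence of its Euler means $(t^p_n)$ by means of a single exponential generating--function identity: the Borel transform of $(u_n)$ at a point $x$ will turn out to be, after rescaling the variable, the Borel transform of $(t^p_n)$. Once that is established, regularity of the Borel method applied to the convergent sequence $(t^p_n)$ finishes the argument.

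First I would record the elementary consequences of the hypothesis. Since $(t^p_n)\to\mu$ in the metric $D$, the sequence is $D$-bounded, say $D(t^p_n,\bar{0})\le M$ for all $n$; equivalently the real endpoint sequences $(t^p_n)^{-}_{\alpha}$ and $(t^p_n)^{+}_{\alpha}$ are bounded by $M$ uniformly in $n$ and $\alpha$. Then $\sum_n D\big(t^p_n\tfrac{X^n}{n!},\bar{0}\big)\le M\sum_n \tfrac{X^n}{n!}=Me^{X}<\infty$, so by Theorem \ref{absolute} the fuzzy series $\sum_n t^p_n\frac{X^n}{n!}$ converges for every $X>0$, and since the Borel method is regular (a convergent sequence of fuzzy numbers is Borel summable to its limit, by splitting the Borel average at a large index) we get $e^{-X}\sum_n t^p_n\frac{X^n}{n!}\to\mu$ as $X\to\infty$.

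The first genuine obstacle is to guarantee that the Borel series of $(u_n)$ itself converges, which the definition requires before the limit can even be formed. This is delicate because $E^{1}$ has no subtraction, so I cannot invert the averaging $t^p_n=\frac{1}{(p+1)^n}\sum_{k}\binom{n}{k}p^{n-k}u_k$ inside $E^{1}$. I would instead pass to the real endpoint sequences, where the Euler transform is an ordinary invertible lower--triangular matrix: the group law behind Theorem \ref{transform}, taken over $\mathbb{R}$ where it is purely algebraic, gives $(u_n)^{-}_{\alpha}=\sum_{k=0}^n\binom{n}{k}(-p)^{n-k}(1+p)^k (t^p_k)^{-}_{\alpha}$ and similarly for the upper endpoints. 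Taking absolute values and using the uniform bound $M$ yields $|(u_n)^{\pm}_{\alpha}|\le M(2p+1)^n$, hence $D(u_n,\bar{0})\le M(2p+1)^n$. Therefore $\sum_n \frac{x^n}{n!}D(u_n,\bar{0})\le Me^{(2p+1)x}<\infty$, and Theorem \ref{absolute} shows that $\sum_n u_n\frac{x^n}{n!}$ converges for every $x>0$.

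With both series convergent, I would establish the key identity
\begin{eqnarray*}
\sum_{n=0}^{\infty}t^p_n\frac{X^n}{n!}=e^{\frac{p}{p+1}X}\sum_{k=0}^{\infty}u_k\frac{1}{(p+1)^k}\frac{X^k}{k!}\qquad(X>0).
\end{eqnarray*}
Starting from the absolutely convergent left-hand series, I substitute the definition of $t^p_n$, interchange the order of summation in the finite truncations by Remark \ref{change} (the same nonnegative-coefficient distributivity used in the proof of Theorem \ref{transform}), and pass to the limit by the reasoning of the preceding lemma, i.e.\ a dominated-convergence estimate in $D$ using the bound $D(u_n,\bar{0})\le M(2p+1)^n$; after the change of index $j=n-k$ the inner sum collapses to $\frac{X^k}{k!}e^{\frac{p}{p+1}X}$ because $\sum_j \frac{1}{j!}\big(\frac{p}{p+1}X\big)^j=e^{\frac{p}{p+1}X}$. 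Writing $X=(p+1)x$ turns the right-hand side into $e^{px}\sum_k u_k\frac{x^k}{k!}$, so that $e^{-x}\sum_n u_n\frac{x^n}{n!}=e^{-(p+1)x}\sum_n t^p_n\frac{((p+1)x)^n}{n!}$; that is, the Borel sum of $(u_n)$ at $x$ equals the Borel sum of $(t^p_n)$ at $(p+1)x$. Letting $x\to\infty$ and invoking the regularity computed in the first step gives $e^{-x}\sum_n u_n\frac{x^n}{n!}\to\mu$, which is the assertion. The step I expect to be hardest is the growth/convergence estimate of the third paragraph: bounding $D(u_n,\bar{0})$ in the absence of additive inverses in $E^{1}$, which is what forces the detour through the real $\alpha$-level endpoints.
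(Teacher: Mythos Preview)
Your proposal is correct and follows essentially the same route as the paper: bound $D(u_n,\bar 0)$ by $M(2p+1)^n$ via the real $\alpha$-level endpoints to secure convergence of the Borel series, establish the generating-function identity $\sum_n t^p_n\frac{((p+1)x)^n}{n!}=e^{px}\sum_k u_k\frac{x^k}{k!}$ by interchanging finite sums (Remark~\ref{change}) and passing to the limit via the dominated estimate (the paper isolates this as a separate lemma), and then conclude by Borel regularity applied to the convergent sequence $(t^p_n)$. The only cosmetic difference is that the paper obtains the growth bound by invoking the classical fact that a real $E_p$-summable sequence is $o((2p+1)^n)$, whereas you write out the matrix inversion explicitly; both arguments live at the real-endpoint level for the same reason you identify.
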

\begin{proof}
Let sequence $(u_n)$ of fuzzy numbers be $E_p$ summable to fuzzy number $\mu$. Our aim is to show that  $\sum_{n=0}^{\infty} \frac{x^n}{n!}u_n$
converges for all $x\in(0,\infty)$ and
\begin{eqnarray*}
\lim_{x\to \infty}e^{-x}\sum\limits\limits_{n=0}^{\infty} \frac{x^n}{n!}u_n = \mu.
\end{eqnarray*}
Since sequence $(u_n)$ is $E_p$ summable to $\mu$, sequence a of Euler means converges to $\mu$.Then we have $\left[t^p_n\right]_{\alpha}\to[\mu]_{\alpha}$ for $0\leq \alpha \leq 1$, which, in special case, implies sequences $(u^-_n(0))$ and $(u^+_n(0))$ are  $E^p$ summable to ${\mu}^-_n(0)$ and ${\mu}^+_n(0)$, respectively. Then we have $u^-_n(0)=o((2p+1)^n)$ and $u^+_n(0)=o((2p+1)^n)$. So we get
\begin{eqnarray*}
D(u_n,\overline{0})=\max\{|u^-(0)|,|u^+(0)|\}=o((2p+1)^n).
\end{eqnarray*}
By using this fact, for all $x\in(0,\infty)$ we have
\begin{eqnarray*}
\sum\limits_{n=0}^{\infty} D\left(\frac{x^n}{n!}u_n,\bar{0}\right)&\leq& \sum\limits_{k=0}^{\infty} D\left(u_n,\bar{0}\right)\frac{x^n}{n!}\leq
\sum\limits_{n=0}^{\infty}\frac{((2p+1)x)^n}{n!}=e^{(2p+1)x},
\end{eqnarray*}
and so from Thereom \ref{absolute} series $\sum\limits_{n=0}^{\infty} \frac{x^n}{n!}u_n$ converges for all $x\in(0,\infty)$. Besides we have
\begin{eqnarray*}
\sum_{n=0}^{m}(p+1)^nt^p_n\frac{x^n}{n!}=\sum_{n=0}^{m}\frac{x^n}{n!}\sum_{k=0}^{n}\binom{n}{k}p^{n-k}u_k=\sum_{k=0}^{m}u_k\sum_{n=k}^{m}\binom{n}{k}\frac{x^n}{n!}p^{n-k}
=\sum_{k=0}^{m}\frac{x^k}{k!}u_k\sum_{n=k}^{m}\frac{(px)^{n-k}}{(n-k)!}
\end{eqnarray*}
and by Lemma \ref{lazým} we get
\begin{eqnarray*}
\sum_{n=0}^{\infty}t^p_n\frac{[(p+1)x]^n}{n!}&=&e^{px}\sum_{k=0}^{\infty}\frac{x^k}{k!}u_k.
\end{eqnarray*}
Dividing both sides by $e^{(p+1)x}$ it follows that
\begin{eqnarray*}
\frac{1}{e^{(p+1)x}}\sum_{n=0}^{\infty}t^p_n\frac{[(p+1)x]^n}{n!}&=&e^{-x}\sum_{k=0}^{\infty}\frac{x^k}{k!}u_k.
\end{eqnarray*}
Finally, since $(t^p_n)\to \mu$ and Borel summability method is regular, by limiting both sides as $x\to\infty$ we conclude that
\begin{eqnarray*}
\lim_{x\to \infty}e^{-x}\sum\limits\limits_{n=0}^{\infty} \frac{x^n}{n!}u_n = \mu.
\end{eqnarray*}
\end{proof}
Borel summability of a sequence of fuzzy numbers may not imply $E_p$ summability. This can be seen by sequence $(u_n)$  of fuzzy numbers defined by
\begin{eqnarray*}
u_n(t)=
\begin{cases}
t-(-1)^nn!, \quad &(-1)^nn!\leq t\leq (-1)^nn!+1\\[5pt]
\dfrac{(-1)^nn!+2-t}{t-(-1)^nn!}, &(-1)^nn!+1\leq t\leq(-1)^nn!+2\\[5pt]
0,& (otherwise).
\end{cases}
\end{eqnarray*}
Sequence $(u_n)$ of fuzzy numbers is Borel summable to fuzzy number
$$\mu(t)=
\begin{cases}
t, \quad &0\leq t\leq 1\\[3pt]
\dfrac{2-t}{t}, \quad &1\leq t\leq 2\\[3pt]
0,& (otherwise),
\end{cases}$$
but not $E_p$ summable to any fuzzy number.
\section{Conclusion}
In this study we have proved comparison theorems for recently introduced summability methods of sequences of fuzzy numbers. Besides, various results dealing with series of fuzzy numbers have been obtained. A comparison theorem, in general, provides us with the facility of extending the results of one method to another one directly without needing a separate proof. So it makes possible to utilize from the results in one method to achive the goals related with the other method. In our case, in view of Theorem \ref{cesaroabel} and Theorem \ref{eulerborel}, we can extend the results for Abel summability method of sequences of fuzzy numbers\cite{yavuzabel} and Borel summability method of sequences of fuzzy numbers\cite{yavuzborel} to Ces\`{a}ro and Euler summability methods, respectively. We mention some of these results concerning the convergence of summable sequences of fuzzy numbers below.
\begin{cor}
If sequence $(u_n)$ of fuzzy numbers is Ces\`{a}ro summable to fuzzy number $\mu$ and $nD(u_n,u_{n-1})=o(1)$, then sequence $(u_n)$ converges to $\mu$.
\end{cor}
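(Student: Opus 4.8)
The plan is to deduce this directly from the comparison Theorem~\ref{cesaroabel} together with the Tauberian theorem for the Abel method of sequences of fuzzy numbers established in \cite{yavuzabel}. First I would apply Theorem~\ref{cesaroabel}: since $(u_n)$ is Ces\`{a}ro summable to $\mu$, it is Abel summable to $\mu$, so that the series $\sum u_nx^n$ converges on $(0,1)$ and $(1-x)\sum_{n=0}^{\infty}u_nx^n\to\mu$ as $x\to1^-$.

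Next I would observe that the hypothesis $nD(u_n,u_{n-1})=o(1)$ is precisely the Tauberian condition attached to the Abel method. Viewing $(u_n)$ as the sequence of partial sums of a series of fuzzy numbers whose consecutive gaps are measured by $D(u_n,u_{n-1})$, this requirement is the fuzzy counterpart of Tauber's classical condition $na_n=o(1)$. Hence the Abel Tauberian theorem of \cite{yavuzabel} applies verbatim and yields $u_n\to\mu$, which is the assertion. This two-step citation route is exactly the mechanism advertised in the conclusion, where the Abel results are transferred to the Ces\`{a}ro method through Theorem~\ref{cesaroabel}.

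The only subtlety is to confirm that the statement borrowed from \cite{yavuzabel} is formulated with exactly this condition; if one instead wants a self-contained argument, the principal obstacle is that fuzzy numbers possess no additive inverses (Lemma~\ref{lemma}(ii)), so the classical Tauber identity
\[
s_n-f(x)=\sum_{k=0}^{n}a_k(1-x^k)-\sum_{k=n+1}^{\infty}a_kx^k,
\]
written for the increment series with partial sums $s_n$ and generating function $f$, cannot be reproduced term by term. One would then have to carry the whole estimate through the metric $D$: bound $D(u_n,\mu)$ by the triangle inequality of Proposition~\ref{pro} into a ``head'' of the form $(1-x)\sum_{k=0}^{n}k\,D(u_k,u_{k-1})$ and a ``tail'' controlled by Abel summability of the increments, and then set $x=1-1/n$, so that the head becomes a Ces\`{a}ro mean of the null sequence $\bigl(k\,D(u_k,u_{k-1})\bigr)$ and the tail is bounded using $k\,D(u_k,u_{k-1})\to0$. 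Routing the argument through Theorem~\ref{cesaroabel} bypasses this difficulty entirely, which is why I would prefer the comparison-theorem route.
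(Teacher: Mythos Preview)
Your proposal is correct and matches the paper's own approach exactly: the corollary is stated without a standalone proof and is obtained precisely by combining Theorem~\ref{cesaroabel} (Ces\`{a}ro summability implies Abel summability) with the Abel Tauberian theorem of \cite{yavuzabel} under the condition $nD(u_n,u_{n-1})=o(1)$. Your additional remarks on a self-contained metric argument are extra; the paper does not pursue that route.
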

\begin{cor}
If series $\sum u_n$ of fuzzy numbers is Ces\`{a}ro summable to fuzzy number $\nu$ and $nD(u_n,\bar{0})=o(1)$, then $\sum u_n=\nu$.
\end{cor}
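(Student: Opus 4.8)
The plan is to obtain this corollary not by a direct partial-sum estimate but by transporting a known Tauberian theorem across the comparison Theorem \ref{cesaroabel}. As the concluding remarks indicate, once Ces\`{a}ro summability is shown to imply Abel summability, any Tauberian theorem available for the Abel method of summability of series of fuzzy numbers (here the one established in \cite{yavuzabel}) can be invoked on the Abel side and read off on the Ces\`{a}ro side. So I would first move the hypothesis into the Abel setting and then apply the cited Abel Tauberian theorem.

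First I would fix the interpretation that the series $\sum u_n$ being Ces\`{a}ro summable to $\nu$ means that its sequence of partial sums $s_n=\sum_{k=0}^n u_k$ is Ces\`{a}ro summable to $\nu$. Applying Theorem \ref{cesaroabel} to the sequence $(s_n)$ then yields that $(s_n)$ is Abel summable to $\nu$; that is, $\sum_{n=0}^\infty s_n x^n$ converges for $x\in(0,1)$ (which is also guaranteed directly by $D(s_n,\bar{0})=o(n)$ from Theorem \ref{bound} together with Theorem \ref{absolute}) and $\lim_{x\to1^-}(1-x)\sum_{n=0}^\infty s_n x^n=\nu$.

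Next I would rewrite this limit in terms of the original terms $u_n$. The point requiring care is that fuzzy numbers have no additive inverses in general (Lemma \ref{lemma}(ii)), so the classical telescoping identity $(1-x)\sum s_n x^n=\sum u_n x^n$ cannot be produced by subtracting $\sum s_n x^{n+1}$ from $\sum s_n x^n$. Instead I would derive it exactly as in the proof of Theorem \ref{cesaroabel}: by the Mertens-type Theorem \ref{mertens} applied to the convergent real series $\sum x^n$ and the convergent fuzzy series $\sum u_n x^n$, one gets $\bigl(\sum_{n=0}^\infty x^n\bigr)\bigl(\sum_{n=0}^\infty u_n x^n\bigr)=\sum_{n=0}^\infty s_n x^n$, whence $(1-x)\sum_{n=0}^\infty s_n x^n=\sum_{n=0}^\infty u_n x^n$. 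Consequently the Abel summability of $(s_n)$ to $\nu$ is precisely the statement that the series $\sum u_n$ is Abel summable to $\nu$, i.e. $\lim_{x\to1^-}\sum_{n=0}^\infty u_n x^n=\nu$.

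Finally, with the series $\sum u_n$ now Abel summable to $\nu$ and the Tauberian hypothesis $nD(u_n,\bar{0})=o(1)$ in force, I would apply the Abel Tauberian theorem of \cite{yavuzabel} — the fuzzy analogue of Tauber's classical theorem, whose Tauberian condition is exactly $nD(u_n,\bar{0})=o(1)$ — to conclude $\sum u_n=\nu$. I do not expect a substantive obstacle here: the only genuine subtlety is the absence of fuzzy subtraction, which is handled by routing the telescoping identity through Theorem \ref{mertens}; the remaining work is the bookkeeping that identifies summability of the partial-sum sequence with summability of the series, and the verification that the hypotheses match those of the cited theorem verbatim.
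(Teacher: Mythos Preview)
Your proposal is correct and matches the paper's intended approach: the paper offers no separate proof but presents this corollary explicitly as a consequence of Theorem~\ref{cesaroabel} together with the Abel Tauberian theorem of \cite{yavuzabel}, which is exactly the route you take. Your write-up supplies the details the paper leaves implicit---in particular the careful passage from Abel summability of the partial-sum sequence $(s_n)$ to Abel summability of the series via Theorem~\ref{mertens}, circumventing the lack of fuzzy subtraction---but the underlying strategy is identical.
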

\begin{cor}\cite{yavuzeuler}
If sequence $(u_n)$ of fuzzy numbers is $E_p$ summable to fuzzy number $\mu$ and  $\sqrt{n} D(u_{n-1}, u_{n})=o(1)$, then $(u_n)$ converges to $\mu$.
\end{cor}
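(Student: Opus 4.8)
The plan is to derive this corollary as a short consequence of the comparison Theorem \ref{eulerborel} combined with an already-established Tauberian theorem for the Borel method of fuzzy sequences from \cite{yavuzborel}. The guiding idea, as announced in the Conclusion, is that a comparison theorem lets one transport a Tauberian result from the stronger (Borel) method back to the weaker (Euler) one without a separate argument: the comparison theorem supplies Borel summability for free, and then the Borel Tauberian theorem, whose hypothesis is exactly the stated gap condition, delivers convergence.

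First I would invoke Theorem \ref{eulerborel}. Since $(u_n)$ is $E_p$ summable to $\mu$, that theorem immediately gives that $(u_n)$ is Borel summable to the same limit $\mu$. This step uses only the summability hypothesis and not the Tauberian condition $\sqrt{n}\,D(u_{n-1},u_n)=o(1)$; it simply converts the Euler hypothesis into a Borel one.

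Second I would apply the Tauberian theorem for Borel summability established in \cite{yavuzborel}, which asserts that a Borel summable sequence of fuzzy numbers satisfying $\sqrt{n}\,D(u_{n-1},u_n)=o(1)$ must converge to its Borel limit. Both of its hypotheses are now in hand: Borel summability to $\mu$ from the first step, and the gap condition $\sqrt{n}\,D(u_{n-1},u_n)=o(1)$ from the statement of the corollary. Hence $u_n\to\mu$, which is the desired conclusion.

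The main obstacle is not the logical chain, which is only two lines, but verifying that the Tauberian condition quoted in \cite{yavuzborel} for the Borel method is literally $\sqrt{n}\,D(u_{n-1},u_n)=o(1)$ and not some reformulation such as a slow-oscillation or one-sided growth condition. If the reference phrases its hypothesis differently, I would need the short intermediate check that $\sqrt{n}\,D(u_{n-1},u_n)=o(1)$ implies that form before the citation can be applied. Given the classical analogy, in which $\sqrt{n}\,(u_n-u_{n-1})=o(1)$ is the standard Tauberian condition governing the passage from Borel summability to convergence, I expect the conditions to coincide verbatim, so that the corollary reduces to a direct combination of Theorem \ref{eulerborel} and the Borel Tauberian theorem of \cite{yavuzborel}.
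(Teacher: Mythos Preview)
Your proposal is correct and matches the paper's intended derivation: the corollary is listed in the Conclusion precisely as a consequence of Theorem~\ref{eulerborel} combined with the Borel Tauberian theorem from \cite{yavuzborel}, and the paper gives no further argument beyond this two-step transfer. Your caveat about the exact form of the Tauberian condition in \cite{yavuzborel} is the only thing to check, and the paper implicitly assumes it holds verbatim.
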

\begin{cor}\cite{yavuzeuler}
If series $\sum u_n$ of fuzzy numbers is $E_p$ summable to fuzzy number $\nu$ and  $\sqrt{n}D(u_n, \bar{0})=o(1)$, then $\sum u_n=\nu$.
\end{cor}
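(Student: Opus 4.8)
The plan is to deduce this series statement from the sequence version proved in the preceding corollary by passing to partial sums. First I would unwind the definitions: by Definition \ref{series}, saying that the series $\sum u_n$ is $E_p$ summable to $\nu$ means exactly that the sequence of partial sums $s_n=\sum_{k=0}^n u_k$ is $E_p$ summable to $\nu$, and the desired conclusion $\sum u_n=\nu$ means $s_n\to\nu$. Thus the statement is precisely the preceding (sequence) Tauberian corollary applied to $(s_n)$, once the hypothesis $\sqrt{n}\,D(u_n,\bar 0)=o(1)$ is recast as the gap condition $\sqrt{n}\,D(s_{n-1},s_n)=o(1)$ required there.

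The key step is the identity $D(s_{n-1},s_n)=D(u_n,\bar 0)$. Since $s_n=s_{n-1}+u_n$, I would invoke translation invariance of the metric, Proposition \ref{pro}(iii), with the common summand $s_{n-1}$:
\begin{eqnarray*}
D(s_{n-1},s_n)=D(\bar 0+s_{n-1},\,u_n+s_{n-1})=D(\bar 0,u_n)=D(u_n,\bar 0).
\end{eqnarray*}
Consequently $\sqrt{n}\,D(s_{n-1},s_n)=\sqrt{n}\,D(u_n,\bar 0)=o(1)$, so $(s_n)$ satisfies both hypotheses of the preceding corollary, and I conclude $s_n\to\nu$, that is, $\sum u_n=\nu$.

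I expect no genuine obstacle here; the entire argument rests on the single metric identity above, which is immediate from Proposition \ref{pro}(iii) together with symmetry of $D$. For completeness, one could instead follow the route advertised in the conclusion: Theorem \ref{eulerborel} turns the $E_p$ summability of $(s_n)$ into Borel summability to $\nu$, after which the series-type Borel Tauberian theorem of \cite{yavuzborel} --- which employs the same order condition $\sqrt{n}\,D(u_n,\bar 0)=o(1)$ --- yields $\sum u_n=\nu$. I would nevertheless prefer the first route, since it relies only on results stated in the present excerpt and avoids appealing to the precise Tauberian condition used for the Borel method.
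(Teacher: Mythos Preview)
Your proof is correct. The paper gives no explicit argument for this corollary, but the discussion introducing Section~5 indicates the intended route is your second option: apply Theorem~\ref{eulerborel} to pass from $E_p$ summability of the partial sums to Borel summability to $\nu$, and then invoke the series Tauberian theorem for the Borel method from \cite{yavuzborel} under the same hypothesis $\sqrt{n}\,D(u_n,\bar 0)=o(1)$. Your preferred route instead bypasses the Borel method entirely, reducing to the preceding sequence corollary via the identity $D(s_{n-1},s_n)=D(u_n,\bar 0)$ supplied by Proposition~\ref{pro}(iii); this makes transparent that the series statement is nothing more than the sequence Tauberian result applied to partial sums. The paper's route, by contrast, is meant to illustrate the utility of the comparison Theorem~\ref{eulerborel}, which is the declared purpose of Section~5. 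Both arguments are sound; yours is the more self-contained one relative to what is actually stated in this paper, while the paper's route requires knowing the precise form of the Borel Tauberian theorem in \cite{yavuzborel}.
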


\end{document}